\newcommand{\BR}{\mathbb{R}}
\DeclareMathOperator{\re}{\mathbb{R}e}
\DeclareMathOperator{\im}{\mathbb{I}m}
\newcommand{\norm}[1]{\left\|#1\right\|}
\renewcommand{\i}{\mathrm i}
\newcommand{\ol}{\overline}
\newcommand{\ds}{\displaystyle}
\newcommand{\del}{\partial}
\newcommand{\sph}{{{\mathbf S}^ 1}}
\newcommand{\tta}{\theta}
\newcommand{\fii}{{\varphi}}
\newcommand{\OM}{\Omega}
\newcommand{\Gam}{\varGamma}
\newcommand{\INF}{{\infty}}
\newcommand{\bu}{{\bf u}}
\newcommand{\bv}{{\bf v}}
\newcommand{\B}{\mathcal{B}}  %% Bukhgeim-Cauchy Operator
\newcommand{\HT}{\mathcal{H}} %% Bukhgeim-HIlbert Operator
\newtheorem{theorem}{Theorem}[section]
\newtheorem{prop}{Proposition}[section]
\newtheorem{lemma}{Lemma}[section]
\title[Partial inversion of the 2D attenuated $X$-ray transform]{Partial inversion of the 2D attenuated  $X$-ray transform with data on an arc}
\begin{document}
\date{\today}
\author{Hiroshi Fujiwara}
\address{Graduate School of Informatics,  Kyoto University, Yoshida Honmachi, Sakyo-ku, Kyoto 606-8501, Japan }
\email{fujiwara@acs.i.kyoto-u.ac.jp}
%    Information for second author
\author{Kamran Sadiq}
%    Address of record for the research reported here
\address{Johann Radon Institute of Computational and Applied Mathematics (RICAM), Altenbergerstrasse 69, 4040 Linz, Austria}
\email{kamran.sadiq@ricam.oeaw.ac.at}
%    Information for last author
\author{Alexandru Tamasan}
\address{Department of Mathematics, University of Central Florida, Orlando, 32816 Florida, USA}
\email{tamasan@math.ucf.edu}
%    General info
\subjclass[2010]{Primary 35J56, 30E20; Secondary 45E05}
\keywords{attenuated $X$-ray transform, attenuated Radon transform, partial data, region-of-interest, local tomography, $A$-analytic maps, Hilbert transform}
\maketitle

%%%%%%%%%%%%%%%  ABSTRACT %%%%%%%%%%%%%%%%%%%%%%%%%%
\begin{abstract}
In two dimensions, we consider the problem of inversion of the attenuated $X$-ray transform of a compactly supported function from data restricted to lines leaning on a given arc. We provide a method to reconstruct the function on the convex hull of this  arc. The attenuation is assumed known. % in this subdomain. 
The method of proof uses the Hilbert transform associated with $A$-analytic functions in the sense of Bukhgeim. 
\end{abstract}

%%%%%%%%%%%%%%%% INTRODUCTION %%%%%%%%%%%%%%%%%%%%%%%
\section{Introduction}

This work concerns  a problem of the inversion of the attenuated $X$-ray transform from partial measurements in two dimensional domains. Discriminated by the type of their partial data, 
such problems have already been considered both in the non-attenuated and the attenuated case. For example the $180^0$ angular data in \cite{rullgard,boman06},  or the partial data case in \cite{bal}; or, in the  constant attenuation case in the works \cite{nooWagner01,panKM02,rullgard04,kuchmentShneiberg,nooDPC07}. 

In here we are concerned with yet a different question, where the (fan beam) data is collected on a convex arc. More precisely, let $\Omega \subset \mathbb{R}^2$ be a strictly convex domain and $\Lambda$ be an arc of its boundary $\Gam$. The chord $L$ joining the endpoints of the arc $\Lambda$ partitions the domain in two subdomains $\Omega^\pm$, where $\OM^+$ denotes the domain enclosed by $\Lambda\cup L$,  see Figure \ref{fig:1} below. For a function $f$ compactly supported in $\OM$, we provide a method to reconstruct $f\lvert_{\OM^+}$ from its attenuated $X$-ray transform over lines leaning on $\Lambda$. In the attenuated case, the attenuation $a$ is assumed known in $\ol\OM$. This is a particular type of the region-of-interest (ROI) inversion problem in tomography \cite{clackdoyleDefrise10}; see \cite{boman16} for some limitations on the reconstruction methods of such problems.

Even in the non-attenuated case, if $f$ happens to also be supported in $\OM^-$, then its $X$-ray data is incomplete and the classical Radon inversion \cite{helgasonBook,radon1917} does not apply: while the measurements are affected by the possible nonzero values of $f|_{\OM^-}$, an entire cone of directions through points in $\OM^-$ are missing in the data, yielding the reconstruction question nontrivial, even for the restriction  $f|_{\Omega^+}$.

The source reconstruction problem here is not covered by the inversion from partial data results mentioned above. However,  the unique determination of $f\lvert_{\Omega^+}$ follows from the support theorem in \cite{bomanQuinto}, provided the attenuation is analytic. One could also adapt the Carleman weight formula in \cite{arbuzovBukhgeim97} by finding an explicit Carleman quenching operator as in \cite{fktRIMS21}. Both such arguments are based on analytic continuation and have not yet yielded a method of reconstruction. The novelty in this work is a reconstruction method, based on the separation of the contribution of $f\lvert_{\Omega^+}$ from that of $f\lvert_{\OM^-}$, and, in the attenuated case, does not assume analyticity of the attenuation coefficient.

% If $f$ is known to be supported in $\OM$, then via a conformal mapping of $\OM$ onto the upper half plane the original arguments based on a Carleman weight formula  in \cite{arbuzovBukhgeim97,arbuzovBukhgeim01} can be modified to prove unique determination of $f$;  see the section Preliminaries  for an explicit Carleman weight-operator tailored for $\OM^+$. 

%%%%%%%%%%%% FIGURE OF THE DOMAIN %%%%%%%%%%%%%%%%%%%
\begin{figure}[ht]
\centering
\begin{tikzpicture}[scale=1.14, cap=round,>=latex]
  % Drawing Omega and the source
      \draw[name path=ellipse, thick,black] (0cm,1cm) ellipse (1.6cm and 2cm);
      %\draw[name path=ellipse, thick,blue] (0.30cm,1.25cm) ellipse (1.0cm and 1.1cm);
      \draw[name path=ellipse, thick,blue] (0.35cm,1.35cm) ellipse (1.0cm and 1.1cm);
  % Source f
      \coordinate[label=above:$\mathbin{\color{blue}{f}}$] (source) at (90:1.8cm);
  % Omega plus and minus
      \coordinate[label=above:$\mathbin{\color{red}{\Omega^{+}}}$] (OMp) at(100:2.4cm);
      \coordinate[label=above:$\Omega^{-}$] (OMm) at(280:0.75cm);
  % Gamma
      \coordinate[label=above:$\mathbin{\color{black}{\Gam = \del \OM}}$] (Gam) at(15:2.3cm);
  % Lambda
      \coordinate[label=above:$\mathbin{\color{black}{\Lambda}}$] (Lam) at(85:3.0cm); 
  % The point zeta
      \filldraw[black] (70:2.79cm) circle(1.2pt);
      \coordinate[label=left:$\zeta$] (zeta) at(68.5:2.70cm);
      \coordinate (zeta) at (68.5:2.78cm);
  % Normal at zeta
      %\draw[->] (70:2.78cm) -- (67:3.5cm);
      \draw[->] (70:2.78cm) -- (66.5:3.5cm);
      \draw [color=black](65:3.6cm) node[rotate=-35, scale=0.9] {$\nu(\zeta)$};
  % The angle theta and the line 
      \coordinate[label=above:$\tta$] (tta) at(74:3.6cm);
      \draw[->] (110:-2.0cm)--(74:3.65cm);
  % The endpoints of Gamma and L
      \filldraw[red] (140:2.08cm) circle(1.2pt);
      \coordinate (endpoint1) at(140:2.10cm);
      \filldraw[red] (41:2.1cm) circle(1.2pt);
      \coordinate (endpoint2) at (41:2.12cm);
      \coordinate[label=below:$\mathbin{\color{red}{L}}$] (Lsegment) at (80:1.4cm);
  % Line connecting the endpoints of the Gamma
      \draw[color=red] (endpoint1) -- (endpoint2);
      \tikzset{position label/.style={below = 3pt, text height = 1.5ex, text depth = 1ex},
               brace/.style={decoration={brace,mirror},decorate}}
  % Drawing the length of the chord from z_{0} to Omega
      \tikzset{position label/.style={above = 2.5pt,text height = 1.1ex,text depth = 1ex},
               brace/.style={decoration={brace,mirror}, decorate}}
\end{tikzpicture}
\caption{Geometric setup: $\del{\OM^+} =\Lambda\cup {\color{red}L}$} \label{fig:1}
\end{figure}

%%%%%%%%%%%%%%%%%%%%%%%%%%%%%%%%%%%%%%%%%%%%%%%%%%
For a real valued function $a\in L^1(\BR^2)$, the attenuated $X$-ray transform of $f$ is given by, 
\begin{align}\label{Xaf}
X_{a}f(z,\tta) := \int_{-\infty}^{\infty} f(z+s\tta) e^{-Da(z+s\tta, \tta)}ds, \quad (z,\tta) \in \OM\times \sph,
\end{align} where 
\begin{align}\label{divbeam}
 Da(z,\tta) := \int_{0}^{\INF}a(z+t\tta)dt 
\end{align} is the divergence beam transform of $a$.
For the non attenuated case $a\equiv 0$ we use the notation $Xf$.

In here, the inversion problem is approached through the known equivalence between the attenuated $X$-ray transform and the boundary value problems for the transport equation: Let $\Gam_\pm:=\{(\zeta,\tta)\in \Gam \times\sph:\, \pm\nu(\zeta)\cdot\tta>0 \}$ denote the outgoing $(+)$, respectively incoming $(-)$ submanifolds of the unit tangent bundle of $\Gam$,
with $\nu(\zeta)$ being the outer normal at $\zeta \in \Gam$ and $\tta$ is a direction in the unit sphere $\sph$. If $u(z,\tta)$ is the unique solution to 
\begin{subequations} \label{AttenRadonTEq}
\begin{align} \label{AttenRadonTEq_in_u}
  \tta\cdot\nabla u(z,\tta) +a(z) u(z,\theta) &= f(z)  \quad (z,\tta)\in \OM\times\sph, \\ \label{UGamMinus}
   u|_{\Gam_-} &= 0, 
\end{align}
\end{subequations}
then its trace on $\Gam_+$ satisfies
\begin{align}\label{u_Gam+}
u \lvert_{\Gam_{+}} (\zeta,\tta) = X_a f (\zeta,\tta), \quad (\zeta,\tta)\in \Gam_{+}.
\end{align}In our problem, the data $X_a f$ is only available on 
\begin{align*}
\Lambda_+:=\{(\zeta,\tta)\in \Lambda\times\sph:\, \nu(\zeta)\cdot\tta>0 \}.
\end{align*}

In Section \ref{partialInv} we prove the following.
\begin{theorem}\label{reconstruct_f_OM+}
Let $\OM\subset\mathbb{R}^2$ be a strictly convex, domain with $C^2$ boundary, and $\Lambda \subset\partial\OM$ be an arc of its boundary. Let $\Omega^+\subset\OM$ be the subset enclosed by $\Lambda$ and by the chord $L$ joining the endpoints of $\Lambda$. For $\mu>1/2$, assume that  $a \in C^{1,\mu}(\ol\OM)$ is a known real valued function in $\OM$.
If $f \in C_0^{1,\mu}(\OM)$ is an unknown real valued function, then its restriction in ${\OM^+}$ can be reconstructed from the partial data $X_{a}f|_{\Lambda_+}$. \end{theorem}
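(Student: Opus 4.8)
The plan is to reduce the reconstruction of $f|_{\OM^+}$ to a boundary-value problem on $\OM^+$ for the transport equation, using the data on $\Lambda_+$ to recover the full Cauchy data of $u$ on $L$. Write $u(z,\theta)$ for the solution of \eqref{AttenRadonTEq}. The key observation is that on $\OM^+$ the function $u$ solves $\theta\cdot\nabla u + a u = f$ with $a$ known there, so if we knew $u$ on the entire parabolic boundary of $\OM^+$ in phase space — that is, $u|_{(\Lambda\cup L)_-}$ — we could propagate along lines and read off $f$ by differentiation. The incoming trace on $\Lambda_-$ is \emph{not} directly given, but the outgoing trace on $\Lambda_+$ is (it equals $e^{Da}X_af$ by \eqref{u_Gam+}). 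The first step, therefore, is to recover $u$ on the chord $L$ from $u|_{\Lambda_+}$. I would do this by exploiting the structure of solutions of the transport equation in the \emph{exterior} subdomain $\OM^-$ (where $f\equiv 0$ is \emph{not} true, but where one works instead on $\OM\setminus\OM^+$), or more efficiently: a line leaning on $\Lambda$ that enters $\OM^+$ must cross $L$; restricting attention to those lines, the portion of the integral \eqref{Xaf} over $\OM^-$ past $L$ can be organized so that the trace of $u$ along $L$ (in all relevant directions) is determined. Concretely, for $(\zeta,\theta)\in\Lambda_+$ the value $u(\zeta,\theta)$ is the line integral of $f$ weighted by the exponential attenuation; splitting the line at its intersection point with $L$ gives $u$ at that point of $L$ in direction $\theta$, provided the contribution from the $\OM^-$-part is known — and since $a$ is known on all of $\ol\OM$ while $f$ is supported in $\OM$, the $\OM^-$ piece still involves the unknown $f|_{\OM^-}$, so this naive splitting is \emph{not} enough.

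This is where the $A$-analytic machinery and the Hilbert transform in the title enter, and it is the heart of the argument. The resolvent/angular Fourier expansion $u(z,\theta)=\sum_n u_n(z)e^{in\theta}$ turns \eqref{AttenRadonTEq_in_u} into a $\bar\partial$-type system, and the negative-frequency part $u_{-1},u_{-2},\dots$ (equivalently the function $z\mapsto u(z,\cdot)$ viewed through its nonpositive Fourier modes) is an $A$-analytic map in Bukhgeim's sense, whose boundary values on $\Gam$ satisfy a Hilbert-transform constraint $(I+iH)$ applied to the appropriate trace. The strategy is: (i) from $u|_{\Lambda_+}$ reconstruct, via the known attenuation, the traces of the $A$-analytic map on the arc $\Lambda$; (ii) use the Bukhgeim–Hilbert transform identity together with the fact that the map is $A$-analytic (hence determined in $\OM^+$ once enough boundary data on the arc $\Lambda$ is known, because $\Lambda$ together with analyticity pins down the function on the side $\OM^+$ it bounds) to extend/propagate these traces to the chord $L$; (iii) having the full trace of $u$ on $\del\OM^+=\Lambda\cup L$ in all directions, in particular on $L_-$, solve the transport problem on $\OM^+$ forward and extract $f$ on $\OM^+$ by applying $\theta\cdot\nabla + a$ to $u$, or equivalently by an explicit line-integral/inversion formula — this is the formula referenced as \eqref{reconf_a>0}.

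Let me restate the skeleton in order: first set up $u$ via \eqref{AttenRadonTEq}, so $u|_{\Lambda_+}=e^{Da}X_af|_{\Lambda_+}$ is known; second, pass to the Fourier expansion in $\theta$ and identify the $A$-analytic (nonpositive-mode) component $\mathbf{u}$ of $u$, recalling its boundary identity under the Bukhgeim–Hilbert transform $H$; third, use the known attenuation $a$ on $\OM^+$ to convert the $a$-analytic picture to the $A$-analytic ($a\equiv0$) picture via the standard integrating-factor $e^{-h}$ with $h$ built from the Radon transform of $a$ (this is why $a$ need only be known in $\OM^+$); fourth — the crux — transport the boundary trace of $\mathbf{u}$ from the arc $\Lambda$ to the chord $L$ using $A$-analyticity in $\OM^+$: since an $A$-analytic map on $\OM^+$ is determined by its trace on the sub-arc $\Lambda$ of $\del\OM^+$ (an argument via the Cauchy-type integral formula for $A$-analytic maps, exactly as $H^2$-Hardy-space functions on a domain are determined by their values on a boundary arc of positive measure), we recover $\mathbf{u}$ throughout $\OM^+$, in particular its trace on $L$; fifth, reassemble $u$ on $L_-$ (combining the recovered nonpositive modes with the positive modes obtained by complex conjugation, using that $f$, $a$ are real) and solve \eqref{AttenRadonTEq_in_u} on $\OM^+$ to read off $f|_{\OM^+}$.

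The main obstacle I anticipate is step four: showing that the trace on the arc $\Lambda$ alone suffices to determine the $A$-analytic map on $\OM^+$, and doing so \emph{stably/constructively} rather than by an abstract uniqueness argument, so that an actual reconstruction formula \eqref{reconf_a>0} results. Unlike the full-boundary case, where the Bukhgeim–Cauchy integral over all of $\Gam$ directly reproduces the function, here one has data on a proper sub-arc, so one must either (a) prove and then invert a Cauchy-type integral equation on $\Lambda$ — a problem of the same flavor as analytic continuation from a boundary arc, which is uniquely solvable but ill-posed — or (b) exploit the specific geometry (strictly convex $\Gam$, the chord $L$) to set up a well-posed Riemann–Hilbert-type problem on $\OM^+$ whose data live on $\Lambda$. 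Closing this gap, and checking that the $C^{2,\alpha}$ regularity of $a$ and $f$ propagates through the Hilbert transform and the transport solve so that all the traces and integrals make classical sense, is where the real work lies; the remaining steps are standard once the $A$-analytic toolbox from the cited Bukhgeim-type theory is in place.
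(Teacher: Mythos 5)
Your overall architecture coincides with the paper's (reduce via the integrating factor $e^{-h}$ to the non-attenuated picture, view the nonpositive Fourier modes as an $\mathcal{L}$-analytic map on $\OM^+$, recover the missing trace on the chord $L$ from the trace on $\Lambda$, then reconstruct inside $\OM^+$), but the step you yourself flag as the crux --- passing from $\bu|_\Lambda$ to $\bu|_L$ --- is precisely the paper's new ingredient (Theorem \ref{newAnalyticTh}), and you leave it open, offering only two candidate strategies rather than an argument. The paper closes it with a concrete mechanism absent from your sketch: rotate so that $L$ lies on the real axis and impose the necessary-and-sufficient boundary condition $(I+i\HT)(\bu|_{\del\OM^+})=0$ on the boundary of the \emph{subdomain} $\OM^+$ (not on $\Gam$). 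Evaluating its $n$-th component at points $x\in L$, every term of the Bukhgeim series integrated over $L$ vanishes identically, because $\frac{d\zeta}{\zeta-x}-\frac{d\ol\zeta}{\ol\zeta-x}=0$ when both $\zeta$ and $x$ are real; this yields the closed-form, well-posed formula \eqref{Pminus_un} for the projection $P^-(u_n|_L)=\tfrac12(I-iH)(u_n|_L)$ in terms of data on $\Lambda$ alone. Evaluating the same condition at points of $\Lambda$ yields the first-kind Cauchy-kernel integral equation \eqref{integralEq} for $P^+(u_n|_L)$, and its unique solvability in the range of $P^+$ is proved by noting the right-hand side is the trace of a function analytic in the upper half plane and applying the Sokhotski--Plemelj formula. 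Your Hardy-space analogy supplies at best an abstract uniqueness heuristic; it does not produce this decomposition into an explicit well-posed part and a characterized ill-posed part, which is what turns the statement into an actual reconstruction and is the content you would have to prove.

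Two further points. First, your endgame is circular as stated: you propose to ``solve the transport problem on $\OM^+$ forward and extract $f$ by applying $\tta\cdot\nabla+a$ to $u$,'' but $u$ inside $\OM^+$ is not available until after an inversion step; the paper instead extends the even and odd nonpositive modes of $v=e^{-h}u$ from $\Lambda\cup L$ into $\OM^+$ by the Bukhgeim--Cauchy formula \eqref{BukhgeimCauchyFormula}, converts back to $u_0,u_{-1}$ via the convolution \eqref{UintermsV}, and reads off $f|_{\OM^+}=2\re\{\del u_{-1}\}+au_0$ as in \eqref{reconf_a>0}, i.e., the zeroth angular mode of the transport equation. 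Second, the regularity bookkeeping you defer is actually needed to invoke the boundary characterization on $\del\OM^+$: the hypotheses $f\in C_0^{2,\alpha}(\OM)$, $a\in C^{2,\alpha}(\ol\OM)$ with $\alpha>1/2$ give $\bu,\bv^{even},\bv^{odd}\in Y_\alpha(\del\OM^+)$ via Proposition \ref{regularity}, which is what Theorem \ref{BukhgeimCauchyThm} requires.
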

The method of reconstruction stated in the result above is yet another application of the theory of $A$-analytic functions originally developed by Bukhgeim \cite{bukhgeim_book} to address the tomography problem from complete data, see \cite{ABK} for the attenuated case. For different approaches to the inversion of the attenuated $X$-ray transform from complete data we refer to \cite{novikov01,novikov02}, and further developments in \cite{naterrer01, bomanStromberg, bal, kazantsevBukhgeimJr07, monard17}. 
 
In the preliminary section we recall some ingredients used in the partial reconstruction method. Section \ref{partialInv} contains the new idea of transporting the $X$-ray data from the boundary arc $\Lambda$ to the interior segment $L$. This is a direct consequence of the range characterization condition  in terms of the Bukhgeim-Hilbert transform associated with $A$-analytic maps, see \cite{sadiqtamasan01,sadiqtamasan02,sadiqscherzertamasan}. It is also in Section \ref{partialInv} that we reconstruct $f\lvert_{\Omega^+}$, and prove Theorem  \ref{reconstruct_f_OM+}. The proof is constructive and yields a (partial) inversion that is implemented in the numerical experiments of Section \ref{numer_recon}.

%%%%%%%%%%%%%%%%%%%%%%%%%%%%%%%%%%%%%%%%%%%%%%%%%%%%%%%%%
\section{Preliminaries}
In this section we recall some known properties of the $A$-analytic functions and of the finite Hilbert transform, on which 
our reconstruction method is based. We denote by $l_\infty$, and by $l_1$, the space of bounded, respectively summable, sequences. For $z=x+\i y$, let 
$\ol{\del} = \left( \del_{x}+ \i \del_{y} \right) /2 $, and $\del = \left( \del_{x}- \i \del_{y} \right) /2$
be the Cauchy-Riemann operators. 

A sequence valued map
$$\OM \ni z\mapsto  \bu(z): = \langle u_{0}(z), u_{-1}(z),u_{-2}(z),... \rangle$$ in $C(\ol\OM;l_\INF)\cap C^1(\OM;l_\INF)$
is called {\em $\mathcal{L}$-analytic}, if
\begin{equation}\label{Aanalytic}
\ol{\del} \bu (z) + \mathcal{L} \del \bu (z) = 0,\quad z\in\OM,
\end{equation} where  $\mathcal{L}$ is the left shift operator,
$\displaystyle\mathcal{L}  \langle u_{0}, u_{-1}, u_{-2}, \cdots  \rangle =  \langle u_{-1}, u_{-2},  \cdots \rangle.$
Note that we use the sequences of non-positive indexes to conform with the original notation in Bukhgeim's work \cite{bukhgeim_book}.

For completeness we recall  the spaces referenced in the results below. For $0<\mu<1$, and  $\Gam$ (some part of) the boundary $\del \OM$ we consider the spaces of sequence valued maps
\begin{equation*}
 \begin{aligned} 
 l^{1,1}_{\INF}(\Gam) &:= \left \{ \bu\; : \sup_{\xi \in \Gam}\sum_{j=1}^{\INF}  j \lvert u_{-j}(\xi) \rvert < \INF \right \},\\
 C^{\mu}(\Gam; l_1) &:= \left \{ \bu:
\sup_{\xi\in \Gam} \lVert \bu(\xi)\rVert_{\ds l_{1}} + \underset{{\substack{
            \xi,\eta \in \Gam \\
            \xi\neq \eta } }}{\sup}
 \frac{\lVert \bu(\xi) - \bu(\eta)\rVert_{\ds l_{1}}}{|\xi - \eta|^{ \mu}} < \INF \right \}.
 \end{aligned}
 \end{equation*}
We similarly consider  $C^{\mu}(\overline\OM ; l_1)$, and $C^{\mu}(\ol\OM ; l_{\INF})$.

Analogous to the analytic maps,  the $\mathcal{L}$-analytic maps  are determined by their boundary values via a Cauchy-like integral formula \cite{bukhgeim_book}. Following \cite{finch}, the Bukhgeim-Cauchy operator $\B$ acting on $\bu=\langle u_{0}, u_{-1}, u_{-2},...\rangle \in l^{1,1}_{\INF}(\Gam)\cap C^\mu(\Gam;l_1)$
is defined component-wise for $n\leq 0$ by
\begin{align} \label{BukhgeimCauchyFormula}
(\B \bu)_{n}(z) &:= \frac{1}{2\pi \i} \int_{\Gam}
\frac{ u_{n}(\zeta)}{\zeta-z}d\zeta  + \frac{1}{2\pi \i}\int_{\Gam} \left \{ \frac{d\zeta}{\zeta-z}-\frac{d \ol{\zeta}}{\ol{\zeta-z}} \right \} \sum_{j=1}^{\infty}  
 u_{n-j}(\zeta)
\left( \frac{\ol{\zeta-z}}{\zeta-z} \right) ^{j},\; z\in\OM;
\end{align} The explicit form above follows \cite{finch}. 

Also similar to the analytic maps, the traces on the boundary of $\mathcal{L}$-analytic maps satisfy some constraints, which can be expressed in terms of a corresponding Hilbert transform introduced in  \cite{sadiqtamasan01}. More precisely, the Bukhgeim-Hilbert transform $\HT$ acting on 
$\bu\in l^{1,1}_{\INF}(\Gam)\cap C^\mu(\Gam;l_1)$ is defined 
component-wise for $n \leq 0$ by
 \begin{align}\label{hilbertT}
(\HT\bu)_{n}(\xi)&=\frac{1}{\pi} \int_{\Gam } \frac{u_{n}(\zeta)}{\zeta - \xi} d\zeta 
+\frac{1}{\pi} \int_{\Gam } \left \{ \frac{d\zeta}{\zeta-\xi}-\frac{d \ol{\zeta}}{\ol{\zeta-\xi}} \right \} \sum_{j=1}^{\infty}  
u_{n-j}(\zeta)
\left( \frac{\ol{\zeta-\xi}}{\zeta-\xi} \right) ^{j},\; \xi\in\Gam.
\end{align}
The first integral is in the sense of principal value, e.g., \cite{muskhellishvili} . 

%%%%%%%%%%%%%%%%%%%%%%%%%%%%%%%%%%%%%
\begin{theorem}\label{BukhgeimCauchyThm}
Let $0<\mu<1$. Let $\bu = \langle u_{0}, u_{-1}, u_{-2},...\rangle$ be a sequence valued map defined on the boundary $\Gam$ and $\B$ be the Bukhgeim-Cauchy operator acting on $\bu$ as in \eqref{BukhgeimCauchyFormula}.   If $\bu \in l^{1,1}_{\INF}(\Gam)\cap C^\mu(\Gam;l_1)$, then $\B \bu\in C^{1,\mu}(\OM;l_\infty)\cap C(\ol \OM;l_\infty)$ is $\mathcal{L}$-analytic in $\OM$. If  $\bu \in C^{1,\mu}(\OM;l_\infty)\cap C(\ol \OM;l_\infty)$ is $\mathcal{L}$-analytic in $\ol\OM$, then $\bu(z)=\B\b(z)$, for $z \in \ol \OM$.

Moreover, for $\bu$ to be the boundary value of an $\mathcal{L}$-analytic function it is necessary and sufficient that
\begin{align} \label{NecSufEq}
   (I+\i\HT) \bu = {\bf {0}}.
\end{align} 
\end{theorem}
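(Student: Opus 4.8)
The plan is to read parts (i)--(ii) directly off the explicit kernel in \eqref{BukhgeimCauchyFormula}, and then to obtain the characterization \eqref{NecSufEq} from a Sokhotski--Plemelj jump formula for $\B$ combined with the Cauchy--Bukhgeim reproducing property. First, for fixed $z\in\OM$ the factor $\left(\frac{\ol{\zeta-z}}{\zeta-z}\right)^{j}$ is unimodular, so the inner series in \eqref{BukhgeimCauchyFormula} is dominated by $\sum_{j\ge 1}|u_{n-j}(\zeta)|\le\|\bu(\zeta)\|_{l_1}$, uniformly in $z$ and $n$, while the first term is the classical Cauchy integral of $u_n$. To differentiate inside the sum and the integral I would use, for $z\in\OM$, the identities $\del_z(\zeta-z)^{-1}=(\zeta-z)^{-2}$, $\ol\del_z(\zeta-z)^{-1}=0$, and
\[
\del_z\!\left(\frac{\ol{\zeta-z}}{\zeta-z}\right)^{j}=j\,\frac{(\ol{\zeta-z})^{j}}{(\zeta-z)^{j+1}},\qquad
\ol\del_z\!\left(\frac{\ol{\zeta-z}}{\zeta-z}\right)^{j}=-\,j\,\frac{(\ol{\zeta-z})^{j-1}}{(\zeta-z)^{j}}.
\]
Each differentiation brings down one factor $j$, so a first derivative is legitimate precisely when $\bu\in l^{1,1}_{\INF}(\Gam)$, and a second derivative needs $\sum_j j^2|u_{-j}|<\INF$, i.e. $\bu\in l^{1,2}_{\INF}(\Gam)\supset Y_\alpha$. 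Combining this with the Plemelj--Privalov theory for the Cauchy integral of a $C^\alpha$ density on a $C^2$ curve gives $\B\bu\in C^{1,\alpha}(\OM;l_\infty)\cap C(\ol\OM;l_\infty)$ under the hypothesis of (i); the stronger $l_1$-valued conclusions and interior $C^2$ regularity of (ii) come from the extra weight ($j$ in the Hölder seminorm, $j^2$ in the sup norm) built into $Y_\alpha$.

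For the $\mathcal{L}$-analyticity, writing $w=\zeta-z$ and $\left(\frac{d\zeta}{w}-\frac{d\ol\zeta}{\ol w}\right)\left(\frac{\ol w}{w}\right)^{j}=\frac{\ol w^{j}}{w^{j+1}}d\zeta-\frac{\ol w^{j-1}}{w^{j}}d\ol\zeta$, the derivative rules above express $\ol\del(\B\bu)_n$ and $\del(\B\bu)_{n-1}$ as series in $\zeta$. After shifting the summation index by one and noting that the Cauchy term of $(\B\bu)_{n-1}$ supplies the $j=1$ contribution, the $d\zeta$-parts cancel identically and the $d\ol\zeta$-parts cancel identically. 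Hence $\ol\del\,\B\bu+\mathcal{L}\,\del\,\B\bu=0$ in $\OM$, i.e. $\B\bu$ is $\mathcal{L}$-analytic; this is the explicit form of Bukhgeim's Cauchy formula following \cite{finch}, and the termwise differentiations are justified by the summability just described.

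For the trace characterization, the key is the jump relation: for $\xi\in\Gam$,
\[
\lim_{\OM\ni z\to\xi}(\B\bu)_n(z)=\tfrac12\,u_n(\xi)+\tfrac{1}{2i}\,(\HT\bu)_n(\xi),
\]
with the principal value needed only on the Cauchy term; the combined kernel $\frac{d\zeta}{\zeta-z}-\frac{d\ol\zeta}{\ol{\zeta-z}}$ is only weakly singular (for a $C^2$ curve one has $\frac{\zeta'(t)}{\zeta(t)-\zeta(s)}-\overline{\left(\frac{\zeta'(t)}{\zeta(t)-\zeta(s)}\right)}=O(1)$ as $t\to s$), so it produces no jump and passes to the limit by dominated convergence. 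Granting this, necessity follows from the Cauchy--Bukhgeim reproducing property: if $\bu$ is the trace of an $\mathcal{L}$-analytic $\bv$, then $\bv=\B\bu$ in $\OM$, hence $\bu=\B\bu\big|_{\Gam}=\tfrac12\bu+\tfrac{1}{2i}\HT\bu$, which rearranges to $(I+i\HT)\bu=\mathbf{0}$ (alternatively one may argue directly via a Cauchy--Pompeiu identity for $\mathcal{L}$-analytic maps). Conversely, if $(I+i\HT)\bu=\mathbf{0}$ then $\HT\bu=i\bu$; setting $\bv:=\B\bu$, which is $\mathcal{L}$-analytic in $\OM$ by (i)--(ii), its boundary trace equals $\tfrac12\bu+\tfrac{1}{2i}(i\bu)=\bu$, so $\bu$ is the boundary value of the $\mathcal{L}$-analytic map $\bv$.

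The step I expect to be the main obstacle is the jump formula in the sequence-valued setting: one must prove the Sokhotski--Plemelj identity for $\B$ with estimates uniform in the Fourier index $n$ and in the $l_\infty$- (respectively $l_1$-) topology, while simultaneously verifying that the unimodular factors together with the weakly singular combined kernel contribute nothing in the limit. This is where the $C^2$-regularity of $\Gam$ and the weighted summability encoded in $l^{1,1}_{\INF}(\Gam)$, $C^\alpha(\Gam;l_1)$ and $Y_\alpha$ are all used; the remaining arguments are the Cauchy-integral calculus applied termwise to sequences.
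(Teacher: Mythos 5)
The paper does not actually prove Theorem \ref{BukhgeimCauchyThm}: parts (i)--(ii) are quoted from \cite{sadiqtamasan02} and the trace characterization \eqref{NecSufEq} from \cite{sadiqtamasan01}, and those proofs do proceed along the route you outline (termwise differentiation of the explicit kernel for $\mathcal{L}$-analyticity and interior regularity, a Plemelj-type analysis of the boundary limit, and the Bukhgeim--Cauchy reproducing property for necessity). Your derivative identities, the index-shift cancellation giving $\ol{\del}\,\B\bu+\mathcal{L}\,\del\,\B\bu=0$, and the algebra converting the jump formula into \eqref{NecSufEq} are all correct.

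The genuine gap is exactly at the step you flag and then dispose of too quickly: the claim that the combined kernel is ``only weakly singular\dots so it produces no jump and passes to the limit by dominated convergence.'' The $O(1)$ bound you invoke holds only when \emph{both} points lie on $\Gam$. In the relevant limit $\OM\ni z\to\xi\in\Gam$ the kernel is $2i\,\im\bigl(\tfrac{\zeta'(t)}{\zeta(t)-z}\bigr)\,dt$, which is \emph{not} uniformly bounded: on the unit circle with $z=(1-\eps)\xi$ it is of size $1/\eps$ at $\zeta=\xi$ and behaves like a Poisson kernel $\eps/(\eps^{2}+t^{2})$ there. Moreover the density $\sum_{j\ge1}u_{n-j}(\zeta)\bigl(\tfrac{\ol\zeta-\ol z}{\zeta-z}\bigr)^{j}$ has no joint limit as $(\zeta,z)\to(\xi,\xi)$, since the unimodular factor depends on the direction of approach; hence neither dominated convergence nor plain continuity gives the ``no jump'' statement for the second integral. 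Proving that this concentrating, oscillatory contribution vanishes (and that $\B\bu$ is continuous, respectively H\"older, up to $\Gam$) is the actual content of the cited results, and it is precisely where the weight $j$ in $l^{1,1}_{\INF}(\Gam)$, the class $C^{\alpha}(\Gam;l_1)$, the space $Y_\alpha$ with $\alpha>1/2$, and the $C^2$, positive-curvature assumption on $\Gam$ enter quantitatively. Without that estimate your argument establishes neither the up-to-the-boundary regularity in (i)--(ii) nor the jump relation on which both directions of \eqref{NecSufEq} rest; note also that the necessity direction uses the reproducing identity $\bv=\B(\bv|_{\Gam})$ for $\mathcal{L}$-analytic $\bv$, which you correctly attribute to Bukhgeim's Cauchy formula but which would itself need proof in a self-contained treatment.
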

For the proof of first statement of the Theorem we refer to \cite[Theorem 2.2]{sadiqtamasan02}. For the proof of the last statement of the Theorem we refer to \cite[Theorem 3.2]{sadiqtamasan01}. 
%%%%%%%%%%%%%%%%%%%%%%%%%%%%%%%%

The method of reconstruction proposed here considers the operator $ [I - \i H_t]$, where $H_t$ is the finite  Hilbert transform 
%We will also make use of the finite Hilbert transform
\begin{align}\label{finite_Hilbert}
H_t g(x)=\frac{1}{\pi}\int_{-l}^{l}\frac{g(s)}{x-s}ds, \quad x \in (-l,l),
\end{align} with the integral understood in the sense of principal value. 
It is well-known, e.g \cite{tricomi57}, that $\i H_t$ is a bounded operator in $L^2(-l,l)$, with spectrum $[-1,1]$, see \cite{koppelmanPincus58,okadaElliott91}. However, $1$ is not in the point spectrum, see e.g., \cite{widom60} for a proof based on Riemann-Hilbert problem. The arguments below use the unitary property in $L^2(\BR)$ of the (infinite) Hilbert transform 
\begin{align}\label{classical_Hilbert}
H f(x)=\frac{1}{\pi}\int_{-\INF}^{\INF}\frac{f(s)}{x-s}ds.
\end{align}
%%%%%%%%%%%%%%%%%%%
\begin{prop}\label{prop_finiteHilbert}In $L^2(-l,l)$,
 \begin{align}\label{kernel_Pminus}
 Ker [I-\i H_t] = \{ 0\}.
\end{align}
\end{prop}
%%%%%%%%%%%%%%%%%%%%
\begin{proof}
Let $f\in L^2(-l,l)$ be extended by zero to the entire real line.
If $f \in Ker [I-\i H_t]$, then 
$|f| = |H_tf|=|Hf|$ a.e. in $(-l,l)$, and 
\begin{align*}
 \norm{f}_{L^2(-l,l)}^2 &= \norm{f}_{L^2(\BR)}^2 = \norm{Hf}_{L^2(\BR)}^2 
%  \\ &
 = \int_{-l}^{l} |H_tf|^2 dx + \int_{|x| >l} |Hf(x)|^2 dx \\
 &= \int_{-l}^{l} |f|^2 dx + \int_{|x| >l} |Hf(x)|^2 dx.
\end{align*}The latter identity shows that $|Hf(x)|=0$, for $|x| >l$. Thus, for $|x|>l$,
%$H f(x)$ is an analytic function in $\ds \frac{1}{x}$, 
\begin{align*}
 0 &=\int_{-l}^{l}\frac{f(s)}{x-s}ds = \frac{1}{x} \int_{-l}^{l} f(s) \sum_{j=0}^{\INF} \left(\frac{s}{x} \right)^jds =   \sum_{j=0}^{\INF} \frac{1}{x^{j+1}} \int_{-l}^{l} f(s) s^jds,
\end{align*}yielding $f$ orthogonal on any polynomial. Since polynomials are dense in $L^2(-l,l)$, $f=0$.
\end{proof}
%%%%%%%%%%%%%%%%%%%%%%%%%%%%%%%%%%%%%%%%%%%%%%%%

%%%%%%%%%%%%%%%%%%%%%%%%%%%%%%%%%%%%%%%%%%%%%%%%%%%%%%%%% 

\section{Partial source reconstruction}\label{partialInv}
%%%%%%%%%%%%%%%%%%%%%%%%%%%%%%%%%%%%%%%%%%%%%%%%%%%%%%
%%%%%%%%%%%%%%%%%%%%%%%%%%%%%%%%%%%%%%%%%%%%%%%%%%%%%%%%%
In this section, we present the method of reconstruction of $f|_{\OM^+}$. We start with  the non-attenuated case, $a=0$. Upon a rotation and translation of the domain $\OM$, we assume without loss of generality that the arc $\Lambda$ lies in the upper half plane with the endpoints on the real axis lying symmetrically about the origin. In particular, $\OM \cap \{  \im{z}=0\}=L= (-l,l)$, for some $l>0$, and $\partial\OM^+=\Lambda\cup L$.

The main idea in our reconstruction method is the recovery of the trace on $L$ of an $\mathcal{L}$-analytic function from its trace on $\Lambda$. In the following result, the arc $\Lambda$ and the chord $L$ are considered without their endpoints.

%%%%%%%%%%%%%%%%%%%%%%%%%%%%%%%%%%%%%%%%%%%%%%%%%%%%%%%%%%%%%%%%%%%%%%
To simplify the statement, for each $n\leq 0$, let us introduce the functions $F_n(z)$ defined on $\Lambda\cup L$ except at $\pm l$, by
\begin{align}\label{Fanalytic}
F_n(z):= \frac{1}{\i \pi }\int_\Lambda\frac{u_n(\zeta)}{\zeta-z}d\zeta 
+\frac{1}{\i \pi }\int_\Lambda\left\{\frac{d\zeta}{\zeta-z}-\frac{d\ol\zeta}{\ol\zeta-\ol z}\right\}\sum_{j=1}^\infty u_{n-j}(\zeta)\left(\frac{\ol\zeta-\ol z}{\zeta-z}\right)^j,\;z\in\Lambda \cup L,
\end{align} where the first integral is in the sense of principal value. Note that $F_n$ is known since $u_n\lvert_\Lambda$ are known for all $n\leq 0$.  
\begin{theorem}\label{newAnalyticTh}
Let $\OM\subset\mathbb{R}^2$ be a strictly convex bounded domain with $C^2$-smooth boundary $\Gam$, $\Lambda \subset\Gam$ be an arc of its boundary, and $L$ be the chord  joining the endpoints of $\Lambda$. Let $\Omega^+$ be the subset of $\OM$ enclosed by $\Lambda\cup L$ and $0<\mu<1$.  If  $\bu  \in l^{1,1}_{\INF}(\del \OM^+)\cap C^\mu(\del \OM^+;l_1)$ is the boundary value of an $\mathcal{L}$-analytic function in $\OM^+$, 
then its trace $\bu \lvert_{L}$  on $L$ is recovered pointwise from the trace $\bu \lvert_{\Lambda}$ on $\Lambda$ as follows:
for each $n\leq 0$, $u_n \big \lvert_{L}$ is the unique solution in $L^2(-l,l)$ of  
\begin{align}\label{Pminus_un}
[I - \i H_t](u_n)(x)=& F_{n}(x), \quad x \in L,
\end{align}where $H_t$ is the finite  Hilbert transform in \eqref{finite_Hilbert}, and 
the right hand side is determined by the data from \eqref{Fanalytic}.
\end{theorem}

%%%%%%%%%%%%%%%%%%%%%%%%%%%%%%%%%%%%%%%%%%%%%%%%%%%%%%%%%%%%
\begin{proof}

Since $\bu  \in l^{1,1}_{\INF}(\del \OM^+)\cap C^\mu(\del \OM^+;l_1)$ is the boundary value of an $\mathcal{L}$-analytic function in $\OM^+$, then the necessity part of Theorem \ref{BukhgeimCauchyThm} yields 
\begin{align}\label{Cond1}
[I+\i\HT] \bu=0,
\end{align}where $\HT$ is the Bukhgeim-Hilbert transform in \eqref{hilbertT}.

We consider \eqref{Cond1} on $L$, where for each $x\in (-l,l)$ and $n\leq 0$, the $n$-th component yields
\begin{align}\nonumber
u_{n}(x)  - \frac{\i}{\pi} \int_{-l}^{l}  \frac{u_{n}(s)}{x-s}  ds = -\frac{\i}{\pi} \int_{\Lambda} \frac{u_{n}(\zeta)}{\zeta-x} d\zeta 
 &-\frac{\i}{\pi} \int_{\Lambda } \left \{ \frac{d\zeta}{\zeta-x}-\frac{d \ol{\zeta}}{\ol{\zeta}-x} \right \}
  \sum_{j=1}^{\infty}  u_{n-j}(\zeta)
\left( \frac{\ol{\zeta}-x}{\zeta-x} \right) ^{j} \\
&-
%\cancel{
\frac{\i}{\pi} 
\int_{-l}^{l} \left \{ \frac{d\zeta}{\zeta-x}-\frac{d \ol{\zeta}}{\ol{\zeta}-x} \right \}
  \sum_{j=1}^{\infty}  u_{n-j}(\zeta)
\left( \frac{\ol{\zeta}-x}{\zeta-x} \right) ^{j}
%}++A
.\label{knowntobezero}
\end{align}

Since the last integral in \eqref{knowntobezero} ranges over the reals, it vanishes. The remaining integrals give $F_n(x)$, and the expression \eqref{Pminus_un} follows.
\end{proof}

The equation \eqref{Pminus_un} may not have any solution for an arbitrary right hand side in $L^2(-l,l)$. However, in the inverse problem, the function $F_n$ already belongs to the range of $I -\i H_t$, so that the solution exists. Moreover, since the range is open, \eqref{Pminus_un} is uniquely solvable in a sufficiently small  $L^2$-neighborhood of $F_n$.
 
%%%%%%%%%%%%%%%%%%%%%%%%%%%%%%%%%%%%%
We remark that $\bu \lvert_{L}$ satisfies further constraints. If we consider \eqref{Cond1} on $\Lambda$, we obtain for each $z\in \Lambda$ and $n\leq 0$:
% the $n$-th component yields
\begin{align*}\nonumber
u_{n}(z)  - \frac{\i}{\pi} \int_{-l}^{l}  \frac{u_{n}(s)}{z-s}  ds = -\frac{\i}{\pi} \int_{\Lambda} \frac{u_{n}(\zeta)}{\zeta-z} d\zeta 
 &-\frac{\i}{\pi} \int_{\Lambda } \left \{ \frac{d\zeta}{\zeta-z}-\frac{d \ol{\zeta}}{\ol{\zeta-z}} \right \}
  \sum_{j=1}^{\infty}  u_{n-j}(\zeta)
\left( \frac{\ol{\zeta-z}}{\zeta-z} \right) ^{j} \\
&-\frac{\i}{\pi} \int_{-l}^{l} \left \{ \frac{1}{s-z}-\frac{1}{s-\ol{z}} \right \}
  \sum_{j=1}^{\infty} u_{n-j}(s) \left( \frac{s-\ol{z}}{s-z} \right) ^{j} ds, %\label{uneqL}
\end{align*}
or, by \eqref{Fanalytic},
%The above system \eqref{uneqL} yields that $\bu|_L$ satisfies additional constraints for each $z\in \Lambda$ and $n\leq 0$,
\begin{align}\label{integralEq2}
 &\int_{-l}^{l} \frac{u_{n}(x)}{x-z}  dx + \int_{-l}^{l}\frac{(z-\ol{z})}{|x-z|^2} 
  \sum_{j=1}^{\infty} u_{n-j}(x) \left( \frac{x-\ol{z}}{x-z} \right) ^{j} dx =  \pi \i \left[ u_{n}(z) - F_{n}(z) \right],\;\;z\in\Lambda. %, \;z\in\Lambda,\;n\leq 0.
\end{align}
The additional constraints  \eqref{integralEq2} may be used to stabilize the numerical inversion. However, in the numerical experiments below the only regularization used is due to the discretization.

% From a numerical perspective it is important to note that the integrals in the right hand side of \eqref{integralEq} are not singular for $z$ on the arc $\Lambda$, away from its endpoints.
% %%%%%%%%%%%%%%%%%%%%%%%%%%%%%%%%%%%%%%%%%%%%%%%%%%%%%%%

Since $\bu$ is now known on the entire boundary $\partial{\OM^+}=\Lambda\cup L$, we employ the Bukhgeim-Cauchy integral formula \eqref{BukhgeimCauchyFormula} to recover 
$u_{-1}$, and then the source  in $\OM^+$, by
\begin{align}\label{recon}
f|_{\OM^+}(z)=2\re\{\del [\B \bu]_{-1}(z)\},\quad z\in\OM^+.
\end{align}

For the sequence valued map  $\bu$ arising in the  $X$-ray transform \eqref{AttenRadonTEq}, its regularity needed in Theorem \ref{newAnalyticTh} follows directly from the regularity properties 
of solutions to the transport equation, particularly as shown in \cite[Proposition 4.1]{sadiqtamasan01}.

%%%%%%%%%%%%%%%%%%%%%%%%%%%%% ATTENUATED CASE %%%%%%%%%%%%%%%%%%%%%%%%%%%%

Next we consider the attenuated case. 
Reconstruction in the attenuated case follows by the reduction to the non-attenuated case via the special integrating factor
$e^{-h}$ with
%\begin{align}%\label{hDefn}
$$h(z,\tta) := Da(z,\theta) -\frac{1}{2} \left( I - \i H \right) Ra(z\cdot \tta^{\perp},\tta),$$
%\end{align}
In the equation above  $\tta^\perp$ is  orthogonal  to $\tta$, $Da(z,\theta)$ is the divergence beam transform in \eqref{divbeam}, $Ra(s,\tta) = \ds \int_{-\INF}^{\INF} a\left( s \tta^{\perp} +t \tta \right)dt$ is the Radon transform of the attenuation, while the Hilbert transform $H$  of  \eqref{classical_Hilbert}
is taken in the first variable and evaluated at $s = z \cdotp \tta^{\perp}$. Since 
\begin{align*}%\label{h=IntegratingFactor}
\tta \cdot \nabla h(z,\tta) = -a(z), \quad (z, \tta) \in \OM \times \sph,
\end{align*}
 $u$ is the unique solution of \eqref{AttenRadonTEq} if and only if 
\begin{align*}%\label{uv_connection}
v(z,\tta)=e^{-h(z,\tta)}u(z,\tta),  \quad  (z,\tta)\in \OM \times \sph,       
\end{align*} is the unique solution of 
\begin{subequations} \label{transp_a>0}
  \begin{align} \label{transp_a>0_in_v}
  \tta\cdot\nabla v(z,\tta) &=f(z)e^{-h(z,\tta)} \quad (z,\tta)\in \OM\times\sph, \\ \label{VGamMinus}
   v|_{\Gam_-} &= 0. 
\end{align}
\end{subequations}
In particular the sequence of their respective negative Fourier modes $u_n=\int_{\sph}u(z,\theta) e^{-\i n\theta}d\theta$ satisfy
\begin{align}\label{AfEq}
%\ol{\del} u_{1}(z) &+ \del u_{-1}(z) +a(z)u_{0}(z) = f(z), \\  \label{AnalyticEq_u}
\ol{\del} u_{n}(z) &+ \del u_{n-2}(z) +a(z)u_{n-1}(z) = 0, \quad n \leq 0,
\end{align}while for $v_n=\int_{\sph}v(z,\theta) e^{-\i n\theta}d\theta$,
\begin{align}
%\ol{\del} v_{1}(z) &+ \del v_{-1}(z) = \alpha_0(z){f}(z),\nonumber \\
\ol{\del} v_{n}(z) &+ \del v_{n-2}(z)= 0, \quad n \leq 0.\label{AnalyticEq_v}
\end{align}

The key property of $h(z,\cdot)$  is that it extends analytically from $\sph$ inside the unit disk as noted by Natterrer in \cite{naterrerBook}; see also \cite{finch, bomanStromberg}. In particular all its negative Fourier coefficients vanish, and thus
\begin{align}\label{ehEq}
  e^{- h(z,\tta)} := \sum_{k=0}^{\INF} \alpha_{k}(z) e^{\i k\fii}, \quad e^{h(z,\tta)} := \sum_{k=0}^{\INF} \beta_{k}(z) e^{\i k\fii}, \quad (z, \tta) \in \ol\OM \times \sph.
  \end{align}

%%%%%%%%%%%%%%%%%%%%%%%%%%%%%%%%%%%%%%%%%%%%%%%
Since the attenuation $a$ is assumed known in ${\ol\OM}$,  $\alpha_n$ and $\beta_n$ are known in $\ol{\OM}$ for all $n\geq 0$. Moreover, for $a\in C^{1,\mu}(\ol \OM)$, $\mu>1/2$, the sequence valued maps
%\begin{align*}&
$z\mapsto\langle \alpha_{0}(z), \alpha_{1}(z), \alpha_{2}(z),  ... , \rangle$, and $z\mapsto \langle \beta_{0}(z), \beta_{1}(z), \beta_{2}(z),  ... , \rangle$ lie in  $C^{1,\mu}(\OM ; l_{1})\cap C(\ol\OM ; l_{1})$; 
%\end{align*}
see, e.g.,   \cite[Lemma 4.1]{sadiqscherzertamasan}.

The equivalence of the attenuated to non-attenuated $X$-ray case is based on the connection between \eqref{AfEq} and \eqref{AnalyticEq_v}, which, in turn, is intrinsic to negative Fourier modes only:
\begin{lemma}\label{intrinsic} \cite[Lemma 4.2]{sadiqscherzertamasan}
Assume $a\in C^{1,\mu}(\ol \OM), \mu>1/2$ and let  $\alpha_j, \beta_j$'s be the Fourier modes in \eqref{ehEq}.

(i) If $\bv \in C^1(\OM, l_1)$ satisfy \eqref{AnalyticEq_v}, and $\bu = \langle u_{0}, u_{-1}, u_{-2}, ... \rangle$ is defined component-wise by the convolution
  \begin{align}\label{UintermsV}
 u_{n} := \sum_{j=0}^{\INF} \beta_j v_{n-j}, \quad n \leq 0,
 \end{align}then $\bu \in C^1(\OM, l_1)$ solves \eqref{AfEq}.

 (ii) Conversely, if $\bu\in C^1(\OM, l_1)$ satisfy \eqref{AfEq}, and $\bv = \langle v_{0}, v_{-1}, v_{-2}, ... \rangle$ is defined component-wise by the convolution
  \begin{align}\label{VintermsU}
 v_{n} := \sum_{j=0}^{\INF} \alpha_j u_{n-j}, \quad n \leq 0.
 \end{align}then $\bv \in C^1(\OM, l_1)$ solves \eqref{AnalyticEq_v}.
\end{lemma}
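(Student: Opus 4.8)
The statement to prove is Lemma \ref{intrinsic}, which relates the negative Fourier modes of solutions to the two transport-equation hierarchies \eqref{AnalyticEq_u} and \eqref{AnalyticEq_v} via convolution with the Fourier coefficients $\beta_j$ and $\alpha_j$ of $e^{h}$ and $e^{-h}$.

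The plan is to verify directly that the convolution formulas transform one hierarchy into the other, using the key structural facts about $h$ from Lemma \ref{hproperties}: namely that $e^{\pm h}$ have only nonnegative Fourier modes, and — crucially — the differential identities those modes satisfy. The identity $\tta\cdot\nabla h = -a$ from \eqref{h=IntegratingFactor}, written in Fourier modes with $\tta\cdot\nabla = e^{i\varphi}\del + e^{-i\varphi}\dba$, gives a recursion among the modes of $h$; exponentiating, one gets the corresponding recursions for $\alpha_k$ and $\beta_k$. Concretely, differentiating $e^{h}\cdot e^{-h}=1$ and using $\tta\cdot\nabla e^{\pm h} = \mp a e^{\pm h}$ yields mode-by-mode identities of the form $\dba\beta_{j+1} + \del\beta_{j-1} + a\beta_j = 0$ for $j\geq 1$ (with the convention $\beta_{-1}=0$), and similarly for $\alpha_j$. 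These are exactly the ``analytic'' relations \eqref{AnalyticEq_v} but shifted.

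Then, for part (i), I would substitute $u_n = \sum_{j\geq 0}\beta_j v_{n-j}$ into the left side of \eqref{AnalyticEq_u}, i.e. compute $\dba u_n + \del u_{n-2} + a u_{n-1}$, differentiate termwise (justified by the $C^1(\OM;l_1)$ hypothesis and the summability of $\langle\beta_j\rangle$ in $C^{1,\alpha}$ from Lemma \ref{hproperties}(iii)), and regroup the double sum. The terms $\beta_j \dba v_{n-j}$, $\beta_j \del v_{n-2-j}$ combine using \eqref{AnalyticEq_v} (in the form $\dba v_{n-j} = -\del v_{n-j-2}$), and the terms $(\dba\beta_j) v_{n-j}$, $(\del\beta_j)v_{n-2-j}$, $a\beta_j v_{n-1-j}$ combine using the mode recursion for $\beta$. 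After reindexing, everything telescopes/cancels to zero. Part (ii) is the mirror image with $\alpha_j$ in place of $\beta_j$; one could alternatively deduce it from (i) by observing that the $\alpha$ and $\beta$ convolutions are mutually inverse (since $e^{-h}e^{h}=1$ means $\sum_j \alpha_j\beta_{k-j}=\delta_{k0}$), but a direct computation is cleaner to present. I would also note that membership $\bu\in C^1(\OM;l_1)$ follows from the convolution of an $l_1$-valued $C^1$ map with the $C^{1,\alpha}(\OM;l_1)$ sequence $\langle\beta_j\rangle$.

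The main obstacle is bookkeeping rather than conceptual: organizing the double sum so that the two cancellation mechanisms (the $v$-hierarchy relation and the $\beta$-mode recursion) are visibly applied, and handling the boundary index $n=0$ correctly — there the relation \eqref{AnalyticEq_u} for $u_0$ would involve $f$ and $u_{-1}$, but the lemma only claims the equations \eqref{AnalyticEq_u} for $n\leq 0$, and the mode recursion for $\beta$ at $j=0$ reads $\dba\beta_1 + a\beta_0 = 0$ (no $\beta_{-1}$ term), so care with this edge case is needed. A secondary technical point is justifying the termwise differentiation and Fubini-type interchange of summation and differentiation, which I would dispatch by citing the $l_1$-summability with the stated Hölder regularity. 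Since the lemma is quoted from \cite{sadiqscherzertamasan}, it suffices to indicate that the verification is the direct computation just outlined; I would keep the proof to the regrouping identity and the two cancellations.
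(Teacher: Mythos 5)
Your verification is correct and is essentially the direct mode-by-mode computation that underlies the cited result; the paper itself offers no proof of this lemma, deferring entirely to \cite[Lemma 4.2]{sadiqscherzertamasan}, so your sketch matches the intended argument (convolve, differentiate termwise using the $l_1$-regularity from Lemma \ref{hproperties}(iii), cancel via the $v$- resp.\ $u$-hierarchy and the recursions $\dba\beta_{k+1}+a\beta_k+\del\beta_{k-1}=0$, $\dba\alpha_{k+1}-a\alpha_k+\del\alpha_{k-1}=0$ with $\beta_{-1}=\alpha_{-1}=0$). One small point to make explicit when you write it out: after regrouping, a stray term $(\dba\beta_0)\,v_n$ (resp.\ $(\dba\alpha_0)\,u_n$) remains, and it vanishes because the mode $-1$ component of $\tta\cdot\nabla e^{\pm h}=\mp a\,e^{\pm h}$ forces $\dba\beta_0=\dba\alpha_0=0$.
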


%%%%%%%%%%%%%%%%%%%%%%%%% Proof of The Main Theorem %%%%%%%%%%%%%%%%%%%%%
{\bf Proof of the Theorem \ref{reconstruct_f_OM+}}\label{proof_f_OM+.}

Recall that 
\begin{align*}
 u(z,\tta) = \sum_{n =-\INF}^{\INF} u_{n}(z) e^{\i n\fii} \quad \text{and} \quad v(z,\tta) = \sum_{n =-\INF}^{\INF} v_{n}(z) e^{\i n\fii}
\end{align*}
are the solution in $\OM \times \sph$ to the boundary value problem \eqref{AttenRadonTEq}, respectively \eqref{transp_a>0}, with $v =e^{-h}u$. Let $\bu= \langle u_{0}, u_{-1}, u_{-2}, ... \rangle $ be the sequence valued map of non-positive Fourier modes of $u$, and let $\bv^{odd} = \langle v_{-1},v_{-3},v_{-5},...\rangle$, $\bv^{even} = \langle v_0, v_{-2},v_{-4},...\rangle$ be the subsequences of negative odd, respectively, even Fourier modes of $v$.

Since $f \in C_0^{1,\mu}(\OM)$ and $a \in C^{1,\mu}(\ol\OM)$, then $u, h, v \in C^{1,\mu}(\ol\OM \times \sph)$. The traces $u \lvert_{\del \OM^+ \times \sph} \in C^{1,\mu}(\del\OM^+;C^{1,\mu}(\sph))$ and $v \lvert_{\del \OM^+ \times \sph} \in C^{1,\mu}(\del\OM^+;C^{1,\mu}(\sph))$. 
By applying \cite[Proposition 4.1 (i)]{sadiqtamasan01} we obtain $\bu, \bv^{even}, \bv^{odd}  \in l^{1,1}_{\INF}(\del \OM^+)\cap C^\mu(\del \OM^+;l_1)$. 

By \eqref{u_Gam+}, the attenuated $X$-ray transform $X_{a}f$ on $\Lambda_+$ determines $\bu$ on $\Lambda$.
By formula \eqref{VintermsU}, $\bu \lvert_{\Lambda}$  determines the traces $\bv^{odd} \lvert_{\Lambda} \in l^{1,1}_{\INF}(\Lambda)\cap C^\mu(\Lambda;l_1)$ and $\bv^{even}\lvert_{\Lambda} \in l^{1,1}_{\INF}(\Lambda)\cap C^\mu(\Lambda;l_1)$ on $\Lambda$.

The equations \eqref{AnalyticEq_v} show that $\bv^{odd}$ and $ \bv^{even}$ are $\mathcal{L}$-analytic in $\OM$, thus in $\OM^+$.
By applying Theorem \ref{newAnalyticTh} to $\bv^{odd} \lvert_{\Lambda} \in l^{1,1}_{\INF}(\Lambda)\cap C^\mu(\Lambda;l_1)$ we recover the trace $\bv^{odd} \lvert_{L}$ on $L$.
Similarly, we recover the trace $\bv^{even} \lvert_{L}$ on $L$.

The Bukhgeim-Cauchy integral formula \eqref{BukhgeimCauchyFormula} extends $\bv^{odd}$ and $\bv^{even}$ from $\Lambda\cup L$ to $\OM^+$ as $\mathcal{L}$-analytic maps. 
From the uniqueness of an $\mathcal{L}$-analytic map with a given trace, we recovered 
\begin{align}\label{v_evenodd}
&\bv^{even}(z)= \B [ \bv^{even}\lvert_{\Lambda\cup L} ](z), \; \text{and} \; \bv^{odd}(z)=  \B [\bv^{odd}\lvert_{\Lambda\cup L} ](z),\; z\in \OM^+.
\end{align} Thus $\bv=\langle v_{0}, v_{-1}, v_{-2},v_{-3}, ... \rangle$ is recovered in $\OM^+$.

Now use the convolution formula \eqref{UintermsV} for $n=0$ and $-1$ to recover $u_0$ and $u_{-1}$ in $\OM^+$.

Finally, $f$ is reconstructed in $\OM^+$ by
\begin{align}\label{reconf_a>0}
f|_{\OM^+}(z)=2\re\{\del u_{-1}(z)\} +a(z)u_0(z),\quad z\in\OM^+.
\end{align}
\qed  %End the proof 

%%%%%%%%%%%%%%%%%%%%%%  Reconstruction Procedure %%%%%%%%%%%%%%%%%%%%

We summarize below in a stepwise fashion the reconstruction of $f$ in the convex hull $\OM^+$ of the boundary arc $\Lambda$. Recall that $L$ is the segment joining the endpoints of $\Lambda$.

\subsection*{Reconstruction procedure}
Consider the data $\bu \lvert_{\Lambda}$.
\begin{enumerate}
\item Using formula \eqref{VintermsU}, determine the traces $\bv^{odd} \lvert_{\Lambda}$ and $\bv^{even}	 \lvert_{\Lambda}$ on $\Lambda$.
 \item Recover the traces $\bv^{odd} \lvert_{L}$ and $\bv^{even} \lvert_{L}$ pointwise on $L$ as follows:
      \begin{enumerate}
               \item Using $\bv^{odd} \lvert_{\Lambda}$ and $\bv^{even}\lvert_{\Lambda}$, compute  by formula \eqref{Fanalytic},  the   
                          function $F_n$, for each $n\leq 0$. 
       \item Recover for each $n\leq 0$, the trace $v_{2n-1} \big \lvert_{L}$ by solving \eqref{Pminus_un}.
	\item Similarly, the trace $\bv^{even} \lvert_{L}$ is recovered pointwise on $L$.
      \end{enumerate}
 \item By Bukhgeim-Cauchy formula \eqref{v_evenodd}, extend $\bv^{odd}$ and $\bv^{even}$ from the boundary $\Lambda\cup L$ to $\OM^+$.
 \item Recover $u_0, u_{-1}$ in $\OM^+$ using \eqref{UintermsV} for $n=0, -1$.
 \item Recover $f|_{\OM^+}$ by formula \eqref{reconf_a>0}.
\end{enumerate}

%%%%%%%%%%%%%%%%%%%%%%%%%%%%%%%%%%%%%%%%%%%%%%%%%%%%%%%%%%%%%%%%
%%%%%%%%%%%%%%%%%%%% Numerical Section %%%%%%%%%%%%%%%%%%%%%%%%%

\section{Numerical reconstruction results}\label{numer_recon}

To illustrate the feasibility of the reconstruction procedure above, in this section we present two of its numerical implementations.  The rigorous analysis on the numerical methods presented below requires further study %from the stand point of theoretical numerical analysis 
and is left for a separate discussion.

Let $\Omega$ be the unit disk with following sub-domains depicted in Figure~\ref{fig:domain}:
\begin{align*}
R &= (-0.25,0.5)\times(-0.15,0.15),\\
B_1 &= \{(x-0.5)^2 + y^2 < 0.3^2\}, \\
B_2 &= \left\{(x+0.25)^2 + \left(y-\dfrac{\sqrt{3}}{4}\right)^2 < 0.2^2\right\},\\
B_3 &= \left\{x^2 + (y-0.6)^2 < 0.3^2\right\}.
\end{align*}
The measurement boundary is $\Lambda = \{ (\cos\theta,\sin\theta) \::\: 0 < \theta < \pi \} $, while the inaccessible  chord  $L = \{ -1 < x < 1, y=0 \}$.
\begin{figure}[h]
\begin{tikzpicture}[scale=2]
\draw [line width=2] (0,0) circle (1); % Omega
  \node at (0.9,0.9) {$\Omega$};

\draw [color=gray,fill=gray,line width=0] (-0.25,0.433013) circle (0.2); % omega2 (left)
\draw [color=gray,fill=gray,line width=0] (0,-0.6) circle (0.3); % omega3 (bottom)
  \node at (-0.4,-0.4) {$B_3$};

\draw [color=gray,fill=gray,line width=0] (-0.25,-0.15) rectangle (0.5,0.15); % R
  \node at (-0.4,-0.1) {$R$};

\draw [dotted,line width=2] (0.5,0) circle (0.3); % omega1 (right)
  \node at (0.5,0.45) {$B_1$};

\draw [dotted,line width=2] (-0.25,0.433013) circle (0.2); % omega2 (left)
    \node at (-0.25,0.8) {$B_2$};

\draw [->] (-1.2,0) -- (1.2,0);
\draw [->] (0,-1.2) -- (0,1.2);

\end{tikzpicture}
\caption{\label{fig:domain}Setting of numerical experiments.  The gray regions correspond to the support of the source $f$, whereas the dotted circles delineate regions of high attenuation. Note that $f$ is to be reconstructed only in the upper semi-disc.}
\end{figure}
%While the theoretical arguments in Theorem~\ref{reconstruct_f_OM+} requires some $C^{1,\mu}$-regular attenuation coefficient $a$ and source $f$, in the numerical setting they are discontinuous. 

We consider the equation \eqref{AttenRadonTEq_in_u} in the unit disk with the attenuation coefficient
\[
a(z) =
\begin{cases}
1, \quad &\text{in $B_1$};\\
2, \quad &\text{in $B_2$};\\
0.1, \quad &\text{otherwise},\\
\end{cases}
\]
and the source
\begin{align}\label{source}
f(z) =
\begin{cases}
2, \quad &\text{in $R$};\\
1, \quad &\text{in $B_2\cup B_3$};\\
0, \quad &\text{otherwise}.\\
\end{cases}
\end{align}
Note the contribution to the data coming from the source
supported in the lower half of the rectangle $R$ and in the ball $B_3$. In the inverse problem, the attenuation $a$ is known in the whole domain $\overline \OM$, while the source $f$ is unknown. We will recover $f$ of \eqref{source} in the upper semidisk $\Omega^+$. 

In the numerical experiments, the measurement data is obtained by numerical computation of the attenuated $X$-ray transform \eqref{Xaf} with \eqref{divbeam}
on $180$ and $360$ equi-spaced angular directions $\zeta\in\Lambda$ and $\theta \in \sph$, respectively.
The red curves in Figure~\ref{fig:measure} depict computed data
in the polar coordinate representation $\bigl\{ \bigl(u(\zeta,\theta),\theta\bigr) \::\: \theta \in \sph \bigr\}$
centered at $\zeta \in \Lambda$ which is indicated by a  black dot ($\bullet$).
The right graph is its magnification at $\zeta=(0,1)$. The pink disks in the left figure show the highly attenuating regions $B_1, B_2$.  

\begin{figure}[h]
\includegraphics[width=.4\textwidth,bb=135 125 900 735]{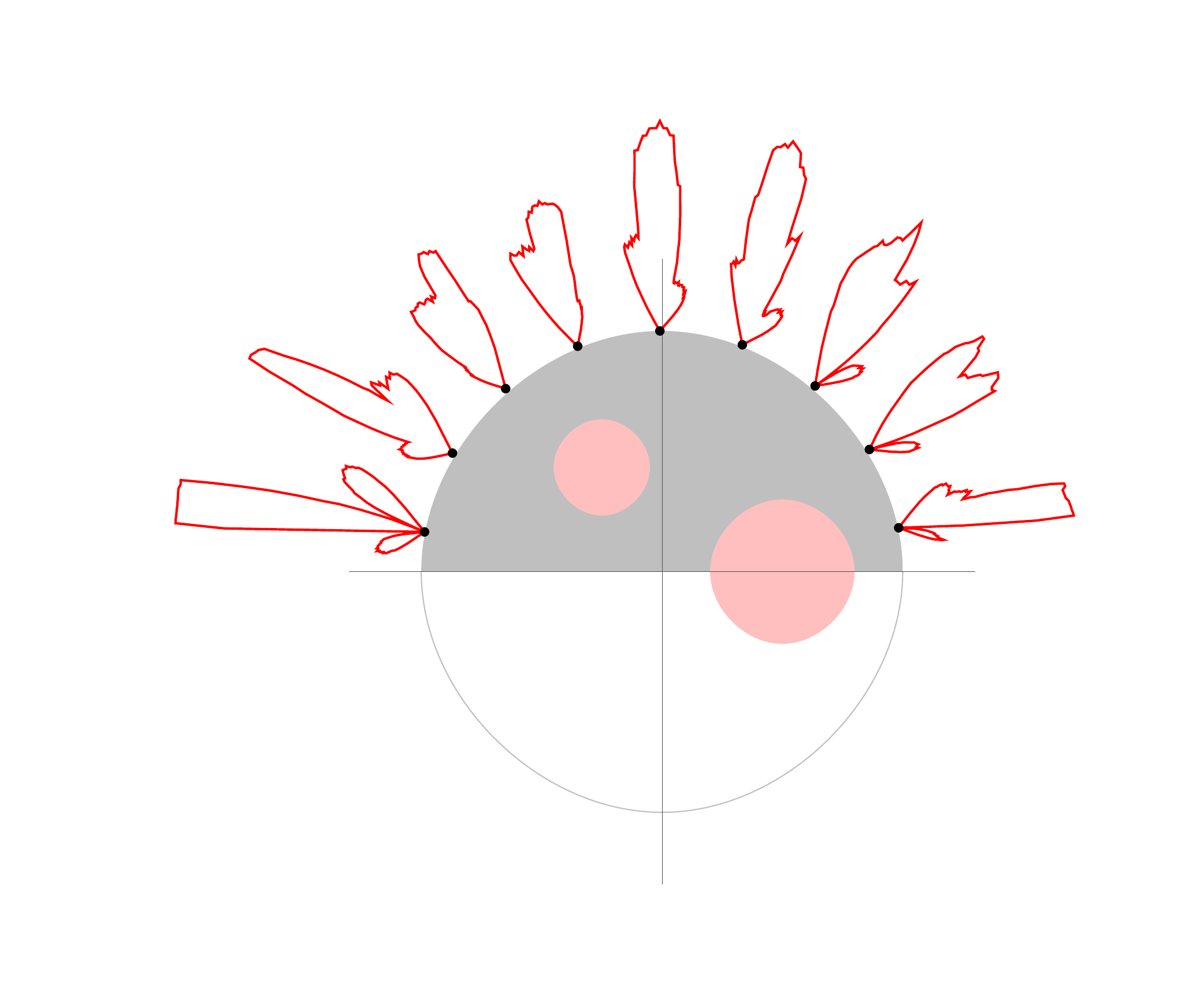}
\hspace{.8in}
\includegraphics[width=.11\textwidth,bb=135 0 215 215]{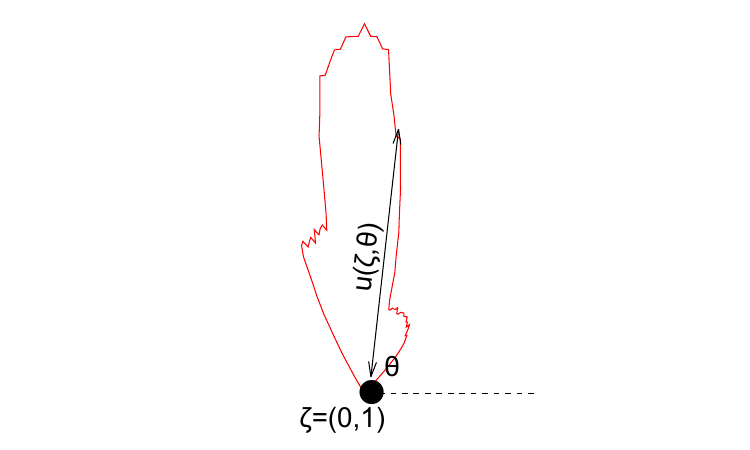}
\caption{\label{fig:measure}Partial measurement data $u(\zeta,\theta)|_{\Lambda\times\sph}$ obtained by numerical computation of
the attenuated Radon transform \eqref{u_Gam+}. The red curves are their polar coordinate representation $\{ (u(\zeta,\theta),\theta) \::\: \theta \in \sph\}$
and that at $\zeta=(0,1)$ is magnified in the right graph. The pink areas show the regions of (known) higher absorption.}
\end{figure}

The lowest negative Fourier mode used in  \eqref{AfEq} is $u_{-64}$ , and the series (e.g.\ in \eqref{BukhgeimCauchyFormula}) are truncated with finite terms. 

To calculate each of the occurring integrals, including in the integrating factor \eqref{ehEq}, we adopt the composite mid-point rule with $100$ sampling points. Particularly, a method in \cite{fktSIIMS20} is employed to calculate integrations in the sense of principal value. Moreover, the integrating factors \eqref{ehEq} are calculated with $360$ equi-spaced angular directions $\fii$. 

The reconstruction requires $\bu|_{\Lambda\cup L}$. While $\bu|_\Lambda$ is known from the data, we obtain each component $\left(u_n|_L\right)$ of $\bu|_L$ by numerically solving  \eqref{Pminus_un}. 
 %Even though $I-\i H_t$ is injective, it has an open and dense range in $L^2(-l,l)$ \cite{koppelmanPincus58,okadaElliott91}, yielding the inversion unstable. Discretization, however, stabilizes the inversion.
The numerical solution $u_n|_L$  is found for each mode $n$ via the discretized collocation
$\ds
[I-\i H_t]u_n(x_i)
\approx u_n(x_i) - \dfrac{\i}{\pi}\sum_{j\neq i}\dfrac{u_n(x_j)}{x_i - x_j}\Delta x,
$
which avoids the singularities. The only regularization used in solving  \eqref{Pminus_un} is implicit by the discretization. For illustration, computed solutions of $u_{n}|_L$, for $-5\leq n\leq0$   are shown in Figure~\ref{fig:vn}, where left and right graphs are their real and imaginary parts respectively.
\begin{figure}[h]
\begin{minipage}{.48\textwidth}
\includegraphics[width=\textwidth]{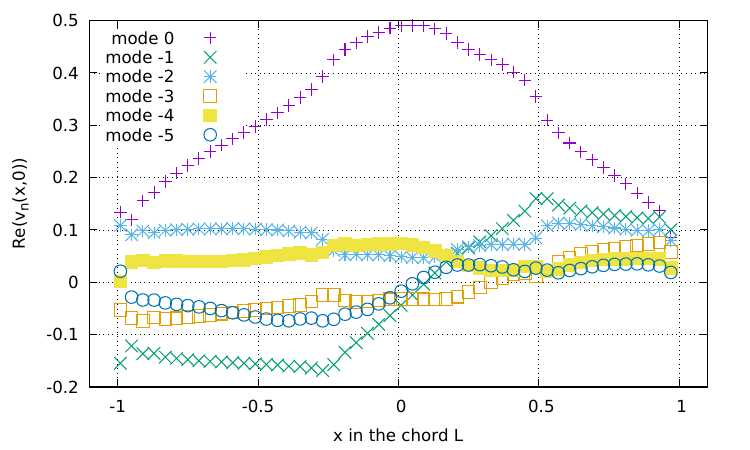}
\end{minipage}
\begin{minipage}{.48\textwidth}
\includegraphics[width=\textwidth]{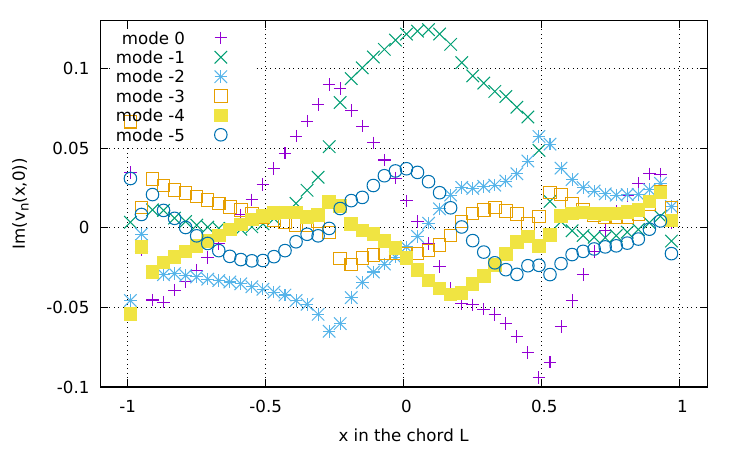}
\end{minipage}
\caption{\label{fig:vn}Numerical solutions on $L$ to the singular integral equation \eqref{Pminus_un}, from $u_{0}$ to $u_{-5}$, real parts (left) and imaginary parts (right).}
% Note: Kamran uses only odd indices, while Hiroshi uses full indices
\end{figure}

When recovering $f$ via \eqref{recon}  we approximate derivatives via the standard central finite difference on a regular grid with $\Delta x = \Delta y = 0.01$. Note that in our numerical experiments we avoid the inverse crime, since the corresponding forward problem is calculated by the line integral on the characteristics.

Under the setting above, the elapsed times for the forward and inverse problems are
approximately $5$ and $180$ seconds respectively, while using the OpenMP parallel computation
on two Xeon E5-2650 v4 (2.20GHz, 12 cores) processors (totally 24 cores).

%%-----------------------------------------
\begin{figure}[h]
\begin{minipage}[m]{.7\textwidth}
\centering
\includegraphics[width=\textwidth,bb=75 45 345 165]{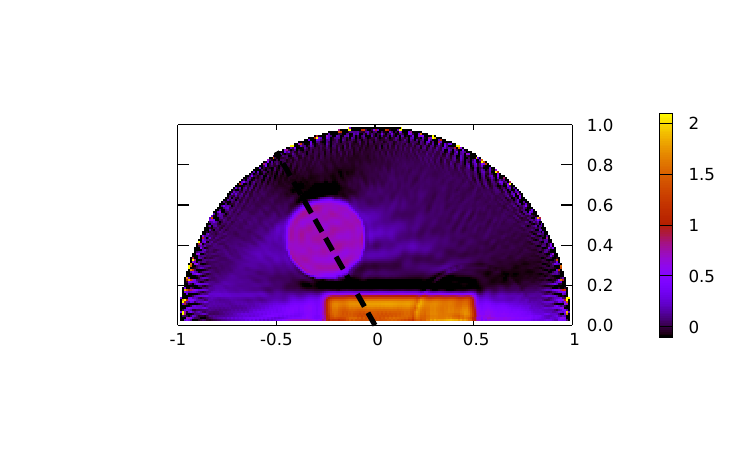}
\subcaption{Profiles of reconstructed source $f$ in $\Omega^{+}$}
\end{minipage}
\hfill
\begin{minipage}{.29\textwidth}
\includegraphics[width=\textwidth,bb=60 0 185 150]{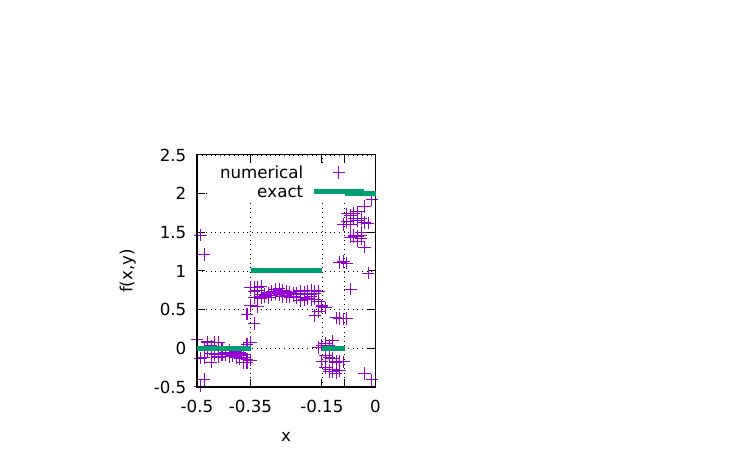}
\subcaption{Section on $y=-\sqrt{3}x$.}
\end{minipage}
\caption{\label{fig:reconst}Numerically reconstructed source $f$ in $\Omega^{+}$ (left), and
its section on $y=-\sqrt{3}x$ (right), which is indicated by the dotted line in the left figure.}
\end{figure}

Figure~\ref{fig:reconst} on the left shows the numerically reconstruction source $f$ in $\OM^+$.  Note that the singular support of $f$  (the discontinuities across $\partial R$ and $\partial B_2$) is clearly detected. This is to be expected, since, at least for an infinitely smooth attenuation, the conormal singularities are known to be microlocally stable. However, in here we do a quantitative reconstruction. To assess its accuracy, Figure \ref{fig:reconst} on the right shows the section of reconstructed $f$ on the line $y=-\sqrt{3}x$ (the dotted segment in the left figure). We note a reasonably quantitative agreement with the exact solution. 
The discrepancy is due in part to the instability of the singular integral equation \eqref{Pminus_un}, %inherent to the inversion of a resolvent operator through a point on the spectrum,  
but also to the simplicity of  the regularization used to stabilize it (merely by discretization). 
%and it will be discussed in a subsequent paper.

%%%%%%%%%%%%%%%%%%%%%%%%%%%%%%%%%%%%%%%%%%%%%%%%%%%%%%%%%%%%%%%%%%%%%%
% (Message by Hiroshi)
% If reconstruction from noisy data is not needed, please erase after this line
%%%%%%%%%%%%%%%%%%%%%%%%%%%%%%%%%%%%%%%%%%%%%%%%%%%%%%%%%%%%%%%%%%%%%%
To check the robustness of the proposed method, we performed a numerical reconstruction from noisy boundary data, where in addition to the computational error caused by numerical quadrature to \eqref{Xaf} with \eqref{divbeam}, 
we introduce some artificial error. The noisy boundary data shown in Figure~\ref{fig:measure:noise10} is generated with a pseudo-random 
number routine in the programming language C++ and has $10.9\%$ error  (in the relative $L^2$ norm) added to the noiseless data of Figure~\ref{fig:measure}.
\begin{figure}[t]
\includegraphics[width=.4\textwidth,bb=135 125 900 735]{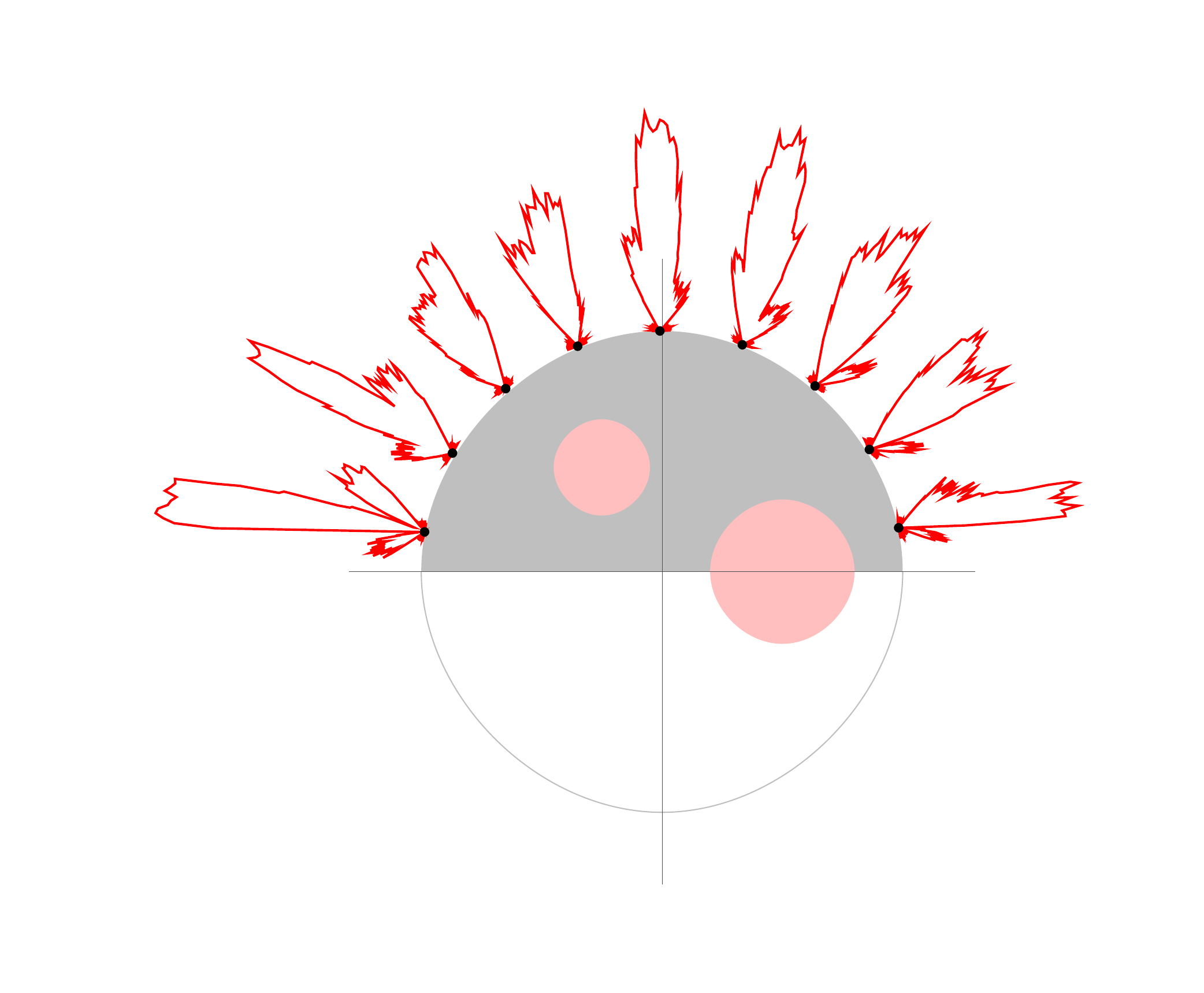}
\hspace{.8in}
\includegraphics[width=.11\textwidth,bb=135 0 215 215]{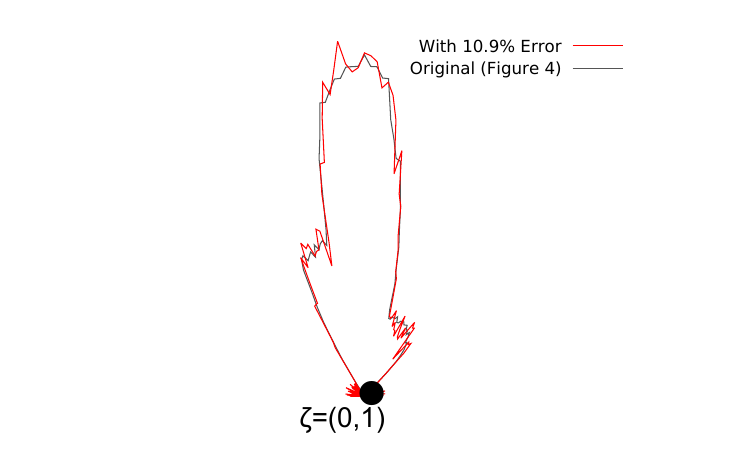}
\caption{\label{fig:measure:noise10}Partial measurement data $u(\zeta,\theta)|_{\Lambda\times\sph}$ with
$10.9\%$ noise in the relative $L^2$ norm, depicted in the same manner as in Figure~\ref{fig:measure}.}
\end{figure}
The reconstructed source $f$ from noisy data is shown in Figure~\ref{fig:reconst:noise10}. 
\begin{figure}[b]
\begin{minipage}[m]{.7\textwidth}
\centering
\includegraphics[width=\textwidth,bb=75 45 345 165]{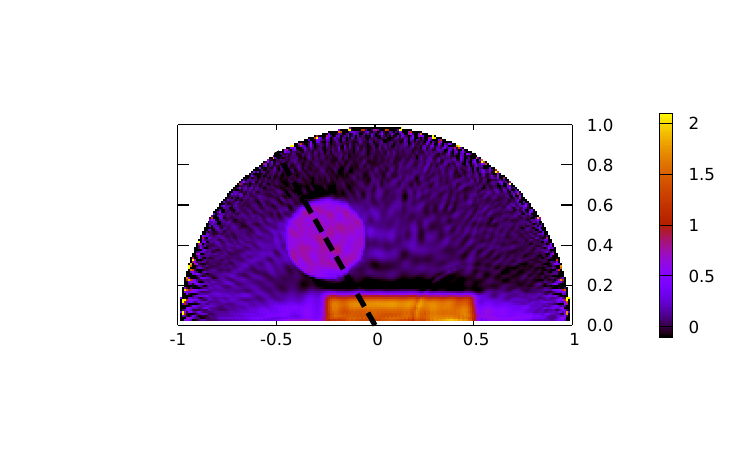}
\subcaption{Profiles of reconstructed source $f$ in $\Omega^{+}$}
\end{minipage}
\hfill
\begin{minipage}{.29\textwidth}
\includegraphics[width=\textwidth,bb=60 0 185 150]{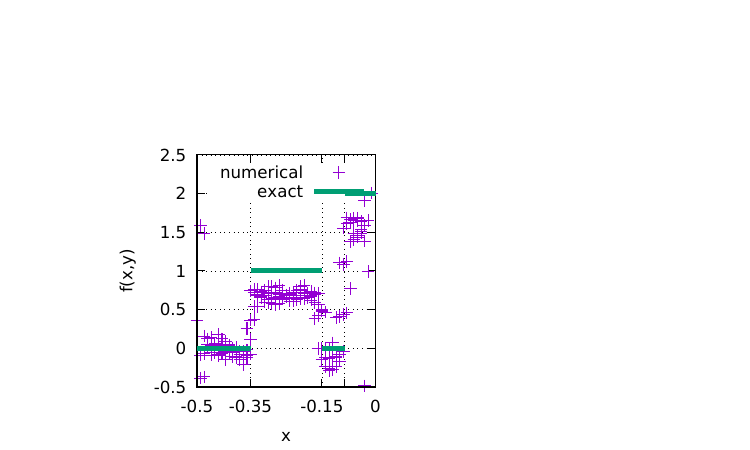}
\subcaption{Section on $y=-\sqrt{3}x$}
\end{minipage}
\caption{\label{fig:reconst:noise10}Numerically reconstructed source $f$ from
partial measurement data with $10.9\%$ noise shown in Figure~\ref{fig:measure:noise10}.}
\end{figure}
Its similarity with the reconstructed source from the noiseless data in Figure~\ref{fig:reconst} indicates the robustness of the proposed procedure. Moreover, since the numerical experiment considered discontinuous coefficients,  
the numerical results also suggest the possibility of reconstruction under less regular assumptions than the ones required by Theorem~\ref{reconstruct_f_OM+}.

%%%%%%%%%%%%%%%%%%%%%%%%%%%%%%%%%%%%%%%%%%%%%%%%%%%%%%%%%%%%%%%%%%%%%%
% End of reconstruction from noisy data
%%%%%%%%%%%%%%%%%%%%%%%%%%%%%%%%%%%%%%%%%%%%%%%%%%%%%%%%%%%%%%%%%%%%%%

%%%%%%%%%%%%%%%%%%%%%%%%%%%%%%%%%%%%%%%%%%%%%%%%%%%%%%%%%%%%%%%%
\section*{Acknowledgment}
We thank  P.~Stefanov and A.~Katsevich for useful discussions. We are also grateful to the anonymous referee for the constructive criticism on a previous version of this work. 
The work of H.~ Fujiwara was supported by JSPS KAKENHI Grant Numbers JP16H02155 and JP20H01821. 
The work of K.~ Sadiq  was supported by the Austrian Science Fund (FWF), Project P31053-N32. 
The work of A.~Tamasan  was supported in part by the NSF grant DMS-1907097.
%%%%%%%%%%%%%%%%%%%%%%%%%%%%%%%%%%%%%%%%%%%%%%%%%%%%%%%%%%%%%%%%

%%%%%%%%%%%%%%%%%%%%%%%%%%%%%%%%%%%%%%%%%%%%%%%%%%%%%%%%%%%%%%%%

%%%%%%%%%%%%%%%%%%%%%%%%%%%%%%%%%%%%%%%%%%%%%%%%%%%%%%%%%%%%%%%%%%%%%%%%%%
\end{document}